\documentclass[10pt,twoside,leqno]{amsart}
\usepackage{amssymb}
\theoremstyle{plain}
\newtheorem{thm}{Theorem}[section]
\newtheorem{cor}[thm]{Corollary}

\theoremstyle{definition}

\theoremstyle{remark}

\numberwithin{equation}{section}

\newcommand{\norm}[1]{\left\Vert#1\right\Vert}

\begin{document}
\setcounter{page}{1}


\vspace*{1.5cm}

\title[  ]
{\large The first common fixed point theorem for commutative set-valued mappings}
\author[I.Mohamadi]{Issa Mohamadi}
\address { \textit{Department of Mathematics,  Islamic Azad University - Sanandaj Branch,
  Sanandaj, Iran }}
\address {\textit{ E-mail addresses:
  imohamadi@iausdj.ac.ir\\imohamadi.maths@gmail.com}}

\date{}
\maketitle

\vspace*{-0.5cm}

\bigskip
\begin{abstract} We establish the first common fixed point theorem for commutative set-valued convex mappings. This may help to generalize
 common fixed point theorems in single-valued setting to those in set-valued. We also prove the existence of a fixed point in a continuously expanding sets under a none convex  upper semicontinuous set-vaued mapping; as a result we answer positively to a question of Lau and Yao. \end{abstract}
\maketitle
\bigskip

\noindent {\footnotesize Keywords}: {\footnotesize Locally convex vector space; Fixed
point; Upper semicontinuous; Convex set-valued mapping;}\\

\noindent {\footnotesize\textit{ Mathematics Subject Classifications
2010}}: {\footnotesize 57N17; 37C25; 40C15; 54C60}

\section{Introduction }
 Let $X$ and $Y$ be two topological vector spaces, we recall that a set-valued mapping $T:X\rightarrow 2^{Y}$ is said to be upper semicontinuous, if for each open subset $V$ of $Y$ and each $x\in X$  with $T(x)\subseteq V$, there exists an open neighborhood $U$ of $x$ in $X$ such that $T(y)\subseteq V$ for all $y\in U$.
 For two set-valued mappings $T,S$ from $X$ into $2^{X}$,  their composition is defined, in the literature,  as $ToS(x)=\bigcup_{y\in S(x)}T(y)$ and $SoT(x)=\bigcup_{y\in T(x)}S(y )$.  $T$ and $S$ are also said to be commutative on $X$ if $ToS(x)=SoT(x)$, for all $x\in X$. We say that $T$ commutes with $ S$
on the right  if $SoT(x)\subseteq ToS(x)$, for all $x\in X$ . We say that a mapping $T$ from $X$ into $2^X$ is convex if $\lambda t+(1-\lambda)z\in T(\lambda x+(1-\lambda)y)$, for all $t\in T(x)$, $z\in T(y)$ and $\lambda\in (0,1)$. We also recall that for a set-valued mapping $T$ from $X$ into $2^{X}$, $x\in X$ is a fixed point of $T$ if $x\in T(x)$.

 Let $(X,d)$ be a metric space and $CB(X)$ denote the set of nonempty closed bounded subset of $X$. For $A,B\in CB(X)$, define
 $$H(A,B)=\max \{\delta(A,B),\delta (B,A)\} $$
  where, $\delta(A,B)=\sup\{d(a,B): a\in A\}$ and $\delta (B,A)=\sup\{d(A,b): b\in B\}$.
 It is known that $(CB(X),H)$ is a metric space. The metric $H$ on $CB(X)$ is called the Hausdorff metric.\par 
 A mapping $T$ from a metric space $(x,d)$ into the metric space $(CB(X),H)$ is said to be nonexpansive if $H(T(x),T(y))\leq d(x,y)$, for all $x,y\in X$. 
 
 Suppose that $C$ is a nonempty subset of a topological space $X$ and $D$ is a nonempty subset of $C$. The mapping $R:C\longrightarrow D$ is said to be a retraction if $R(x)=x$ for all $x\in D$; that is, $R^{2}=R$. In this case, $D$ is called a retract of $C$. When $(X,d)$ is a metric space then $D$ is called a nonexpansive retract of $C$ if $R$ is a nonexpansive mapping.

   For more details on these and related concepts refere to \cite{Aubin}. 
 
 There are a number of landmark fixed point theorems for set-valued mappings. In $1941$, Kakutani \cite{Kakutani} showed that if $C$ is a nonempty convex compact subset of an $n$-dimentional Euclidean space $\mathbb{R}^{n}$ and $T$ from $C$ into $2^{C}$ is an upper semicontinuous mapping such that $T(x)$ is a nonempty convex closed subset of $C$ for all $x\in C$; then, $T$ possesses a fixed point in $C$. In $1951$, Glicksberg \cite{Glicksberg} and in $1952$, Fan \cite{Fan}, independently, generalized Kakutani's fixed point theorem $[5]$ from Euclidean spaces to locally convex vector spaces. In \cite{Issa}, we showed that for a continuously expanding compact and convex subset of a locally convex vector space, under an upper semicontinuous set-valued convex mapping, there exists at least one point that remains fixed under the expansion. In this work we generalize this result to an arbitrary upper semicontinuous set-valued mapping in one dimensional Euclidean space $\mathbb{R}$. \par 
 Many common fixed point theorems for single-valued mappings have also been developed; among them, the Markov-Kakutani fixed point theorem is of great interest for its numerous verity of applications that can be found in the literature. In $1936$, Markov \cite{Markov} and in $1938$, Kakutani \cite{Kakutani1}  proved, independently, that each family of commutative continuous affine mappings on a nonempty compact convex subset of a Hausdorff topological vector space into itself has a common fixed point. A part of our work has also been devoted to generalize their theorem, applying our fixed point theorem along with the Fan-Glicksberg fixed point theorem, for a family of two but convex and set-valued mappings. The last part of our work is also  devoted to provide an answer to a question by Lau and Yao \cite{Lau}. In fact, we generalize our common fixed point theorem for none convex set-valued mappings in one dimensional Euclidean space.
 .\par  \section{Our results}

In the following theorem, we prove the existence of a common fixed point for two set-valued convex mappings.

\begin{thm}  Let $X$ be a locally convex Hausdorff vector space, and $C$ be a nonempty convex compact subset of $X$. Suppose that $\{T_{1}, T_{2}\}$ are two commutative upper semicontinuous convex set-valued mappings from $C$ into $2^{C}$ such that $T_{i}(x)$, for $i=1,2$ and $x\in C$, is a nonempty closed subset of $X$. Then, there exists $x\in C$ such that $x\in T_{1}(x)\cap T_{2}(x)$.
\end{thm}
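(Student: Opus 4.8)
The plan is to apply the Fan--Glicksberg fixed point theorem \cite{Fan, Glicksberg} three times. First I would record some bookkeeping. Taking $x=y$ in the definition of a convex set-valued map shows that each value $T_i(x)$ is convex, hence a nonempty compact convex subset of $C$; and since $C$ is compact and each $T_i$ is upper semicontinuous with closed values, the graph of $T_i$ is closed in $C\times C$. From closedness of the graph, the fixed point set $F_1:=\set{x\in C: x\in T_1(x)}$ is closed, hence compact; it is convex, since $x\in T_1(x)$ and $y\in T_1(y)$ give $\lambda x+(1-\lambda)y\in T_1(\lambda x+(1-\lambda)y)$ by convexity of $T_1$ (with $t=x$, $z=y$); and it is nonempty by Fan--Glicksberg applied to $T_1$ on $C$. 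Thus $F_1$ is a nonempty compact convex set.

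The crux is that, unlike in the single-valued affine case, $T_2$ need not carry $F_1$ into $F_1$, so one cannot simply restrict $T_2$ to $F_1$ and iterate. Instead I would show that $T_2(x)\cap F_1\neq\varnothing$ for every $x\in F_1$. Fix such an $x$, put $A=T_2(x)$ (nonempty, compact, convex), and use $x\in T_1(x)$ together with commutativity:
\[ A=T_2(x)\subseteq\bigcup_{w\in T_1(x)}T_2(w)=(T_2\circ T_1)(x)=(T_1\circ T_2)(x)=\bigcup_{w\in A}T_1(w), \]
so $A\subseteq T_1(A)$. Consider the reverse relation $\Psi\colon A\to 2^{A}$ given by $\Psi(a)=\set{z\in A: a\in T_1(z)}$. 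The displayed inclusion is exactly the statement that $\Psi$ is nonempty-valued. Each $\Psi(a)$ is convex, because $a\in T_1(z_1)$ and $a\in T_1(z_2)$ force $a\in T_1(\lambda z_1+(1-\lambda)z_2)$ (convexity of $T_1$ with $t=z=a$); each $\Psi(a)$ is closed, because the graph of $\Psi$ is $(A\times A)$ intersected with the image of the closed graph of $T_1$ under the coordinate flip; and $\Psi$ is upper semicontinuous, having closed graph with $A$ compact. Fan--Glicksberg applied to $\Psi$ on $A$ then yields $a^{*}\in A$ with $a^{*}\in\Psi(a^{*})$, i.e. $a^{*}\in T_1(a^{*})$ and $a^{*}\in T_2(x)$, so $a^{*}\in T_2(x)\cap F_1$.

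Finally I would define $\Phi\colon F_1\to 2^{F_1}$ by $\Phi(x)=T_2(x)\cap F_1$. By the previous step $\Phi$ is nonempty-valued; its values are compact and convex (intersections of $T_2(x)$ with $F_1$); and $\Phi$ is upper semicontinuous, since for open $V\supseteq\Phi(x)$ one has $T_2(x)\subseteq V\cup(X\setminus F_1)$, and upper semicontinuity of $T_2$ supplies a neighbourhood $U$ of $x$ with $T_2(U)\subseteq V\cup(X\setminus F_1)$, so that $\Phi(U)\subseteq V$. Since $F_1$ is a nonempty compact convex subset of the locally convex Hausdorff space $X$, Fan--Glicksberg applied to $\Phi$ produces $x^{*}\in F_1$ with $x^{*}\in\Phi(x^{*})=T_2(x^{*})\cap F_1$, i.e. $x^{*}\in T_1(x^{*})\cap T_2(x^{*})$, which is the assertion.

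I expect the only genuine difficulty to be the middle step: manufacturing a fixed point of $T_1$ inside the prescribed convex compact set $T_2(x)$. The point is that commutativity --- in fact only the one-sided inclusion $T_2\circ T_1\subseteq T_1\circ T_2$, i.e. that $T_1$ commutes with $T_2$ on the right --- forces $T_2(x)\subseteq T_1(T_2(x))$, and this ``inverse'' condition is precisely a Fan--Glicksberg hypothesis for the reverse relation $\Psi$, which itself inherits convex values and a closed graph from the convexity and upper semicontinuity of $T_1$ on the compact set $C$. Everything else reduces to standard facts about upper semicontinuous maps into a compact space.
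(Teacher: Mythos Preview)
Your proof is correct and follows the same overall architecture as the paper: show $F_1=\mathrm{Fix}(T_1)$ is nonempty compact convex via Fan--Glicksberg, define $\Phi(x)=T_2(x)\cap F_1$ on $F_1$, verify $\Phi$ is nonempty-valued, and apply Fan--Glicksberg again. The paper's $G$ is exactly your $\Phi$, and both arguments reduce the nonemptiness of $\Phi(x)$ to the sublemma ``if $A$ is compact convex and $A\subseteq T_1(A)$ then $T_1$ has a fixed point in $A$,'' obtained from commutativity via $T_2(x)\subseteq T_1(T_2(x))$.

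The one genuine difference is how that sublemma is established. The paper simply cites Theorem~2.2 of \cite{Issa} (the author's earlier ``expanding set'' fixed point result for convex upper semicontinuous multimaps). You instead give a self-contained argument: pass to the reverse relation $\Psi(a)=\{z\in A: a\in T_1(z)\}$, observe that convexity of $T_1$ makes $\Psi$ convex-valued and that the closed graph of $T_1$ makes $\Psi$ upper semicontinuous on the compact $A$, and apply Fan--Glicksberg a third time. This is an elegant and more elementary route, since it avoids importing an external theorem and shows transparently why convexity of $T_1$ (not merely convex-valuedness) is what is needed. Your closing observation that only the one-sided inclusion $T_2\circ T_1\subseteq T_1\circ T_2$ is used matches the paper's Remark following the theorem.
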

\begin{proof}
Let $Fix(T_{i})$ indicates the fixed points set of $T_{i}$, for $i=1,2$. Then, by the Fan-Glicksberg fixed point theorem, $Fix(T_{1})$ is nonempty compact convex subset of $X$. Define $G:Fix(T_{1})\rightarrow 2^{Fix(T_{1})}$ by $G(x)=T_{2}(x)\cap Fix(T_{1})$, for $x\in Fix(T_{1})$. Then, $G$ is an upper semicontinuous set-valued mapping in the topology on $Fix(T_{1})$ induced from $X$. Now, we show that $G(x)$ is nonempty. Let $x\in Fix(T_{1})$, then $x\in T_{1}(x)$. Accordingly, $T_{2}(x)\subseteq T_{2}(T_{1}(x))=T_{1}(T_{2}(x))$ by commutativity of $T_{1}$ and $T_{2}$ and definition of composition for set-valued mappings. Since $T_{1}(x)$ is a nonempty convex compact subset of $X$, by Theorem $2.2$ in \cite{Issa}, $T_{1}$ has a fixed point in $T_{2}(x)$. That is , there exists $y\in T_{2}(x)$ such that $y\in T_{1}(y)$. It yields that $G(x)$ is nonempty. Therefore, by the Fan-Glicksberg fixed point theorem, again, $G$ has a fixed point on $Fix(T_{1})$. Thus, there exists $x\in Fix(T_{1})$ so that $x\in G(x)$. This means $x\in T_{2}(x)\cap Fix(T_{1})$. This completes the proof.
\end{proof}

{\bf Open problem 1.} We still don't know whether or not Theorem $2.1$ is valid for a family of infinite number of  commutative set-valued convex mappings; that is, whether or not a generalization of the Markov-Kakutani fixed point theorem to commutative set-valued convex mappings holds.\par  
{\bf Remark.} In Theorem $2.1$ instead of commutativity we can suppose that $T_{1}$ commutes with $T_{2}$ on the right.\\

Next, we generalize Theorem $2.2$ in \cite{Issa} for an arbitrary upper semicontinuous set-valued mapping in one dimesional Euclidean spaces.
\begin{thm}
Let $C$ be a nonempty convex compact subset of $\mathbb{R}$. Assume that $T:C\rightarrow 2^{\mathbb{R}}$ is a set-valued upper semicontinuoues mapping such that $T(x)$ is a nonempty compact convex subset of $\mathbb{R}$ for all $x\in C$. If $C\subseteq T(C)$, then $T$ possesses a fixed point in $C$.
\end{thm}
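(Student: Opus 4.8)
The plan is to reduce the set-valued statement to a one-dimensional selection argument on an interval. First I would observe that since $C$ is a nonempty convex compact subset of $\mathbb{R}$, it is a closed bounded interval $C=[a,b]$ (the degenerate case $a=b$ being trivial, since then $C=\{a\}\subseteq T(C)=T(a)$ forces $a\in T(a)$). For each $x\in[a,b]$ the set $T(x)$ is a nonempty compact convex subset of $\mathbb{R}$, hence itself a closed bounded interval $[f(x),g(x)]$ with $f(x)=\min T(x)$ and $g(x)=\max T(x)$. Upper semicontinuity of $T$ then translates into the usual properties of these envelope functions: $f$ is lower semicontinuous and $g$ is upper semicontinuous on $[a,b]$ (for instance, if $g(x)<c$ then $T(x)\subseteq(-\infty,c)$ and u.s.c. gives a neighborhood on which $g<c$; similarly for $f$).

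Next I would exploit the hypothesis $C\subseteq T(C)$. This says that for every $t\in[a,b]$ there is some $x\in[a,b]$ with $t\in T(x)=[f(x),g(x)]$. In particular, applying this to the endpoints $t=a$ and $t=b$, there exist $x_0,x_1\in[a,b]$ with $f(x_0)\le a\le g(x_0)$ and $f(x_1)\le b\le g(x_1)$. I would then consider the function $h(x)=g(x)-x$, which is upper semicontinuous, together with $k(x)=x-f(x)$, which is also upper semicontinuous. At $x_0$ we have $g(x_0)\ge a\ge x_0$ only if $x_0=a$; more carefully, the point is to find a point where the interval $[f(x),g(x)]$ straddles $x$. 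The cleanest route: if $g(a)\ge a$ and $f(b)\le b$ — which hold automatically since $T$ maps into $\mathbb{R}$ but we need the straddling — one runs an intermediate-value / sup argument. Concretely, let $S=\{x\in[a,b]: g(x)\ge x\}$. Using $x_1$ with $g(x_1)\ge b\ge x_1$ we see $x_1\in S$, so $S\ne\emptyset$; let $x^*=\sup S$. Upper semicontinuity of $g$ gives $g(x^*)\ge x^*$ (take a sequence in $S$ increasing to $x^*$; $\limsup g \le g(x^*)$ and each $g\ge$ the corresponding point). If $x^*=b$ or if $g(x^*)=x^*$ we are at a point with $x^*\le g(x^*)$; I must also produce $f(x^*)\le x^*$ to conclude $x^*\in T(x^*)$. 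For that one uses that just to the right of $x^*$ (if $x^*<b$) we have $g(x)<x$, and combined with $C\subseteq T(C)$ applied to the value $x^*$ — there is $z$ with $f(z)\le x^*\le g(z)$ — a short monotonicity/continuity bookkeeping pins down $f(x^*)\le x^*\le g(x^*)$, i.e. $x^*\in T(x^*)$.

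The step I expect to be the main obstacle is the last one: turning the covering hypothesis $C\subseteq T(C)$ into the two-sided inequality $f(x^*)\le x^*\le g(x^*)$ at the single candidate point $x^*$. The envelope functions $f,g$ need not be monotone, so one cannot simply chase endpoints; the argument must genuinely use compactness of $C$ together with the semicontinuity of $f$ and $g$ to pass to the limit. A robust way to handle this is to avoid picking one $x^*$ and instead argue by contradiction: suppose $x\notin T(x)$ for all $x\in[a,b]$, so for each $x$ either $g(x)<x$ or $f(x)>x$; let $A=\{x:g(x)<x\}$ and $B=\{x:f(x)>x\}$. By semicontinuity $A$ and $B$ are open in $[a,b]$, they cover $[a,b]$, and $a\notin A$ while $b\notin B$. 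Then one shows $T(C)\subseteq (-\infty,\sup\{g(x):x\in \bar B\})\cup\cdots$ fails to cover $C$: more precisely, using connectedness of $[a,b]$ there is a point $x_1$ in the common boundary with $g(x_1)\le x_1\le f(x_1)$, contradicting $f\le g$ unless $x_1\in T(x_1)$. This contradiction-plus-connectedness packaging is, I expect, the cleanest way to discharge the obstacle, and it is essentially where one-dimensionality of $\mathbb{R}$ is indispensable — the analogous statement can fail in higher dimensions without convexity of the mapping, which is exactly why Theorem~2.2 of \cite{Issa} required convexity and why the present improvement is special to $\mathbb{R}$.
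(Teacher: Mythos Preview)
Your contradiction-via-connectedness argument is essentially correct and far more elementary than the paper's proof, modulo two easily repaired slips. First, ``$a\notin A$, $b\notin B$'' is unjustified (nothing forces $g(a)\ge a$ or $f(b)\le b$); what you need---and have already set up---is that the witnesses $x_0,x_1$ to $a\in T(x_0)$, $b\in T(x_1)$ satisfy $f(x_0)\le a\le x_0$ and $x_1\le b\le g(x_1)$, hence $x_0\notin B$ and $x_1\notin A$, so both $A$ and $B$ are nonempty. Second, at a putative boundary point the inequalities come out as $f(x)\le x\le g(x)$, i.e.\ $x\in T(x)$, contradicting the no-fixed-point assumption directly rather than via $f\le g$. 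Since $A$ and $B$ are disjoint (because $f\le g$), open, nonempty, and cover the connected interval $[a,b]$, the contradiction is immediate. The paper proceeds quite differently: it applies Zorn's lemma to $\{U\subseteq C:\ U\text{ closed convex},\ U\subseteq T(U)\}$ to extract a minimal $U_0=[a,b]$, then runs two further nested Zorn arguments on auxiliary families of subintervals (one imposing endpoint-intersection conditions $T(c)\cap[c,d]\ne\emptyset$, $T(d)\cap[c,d]\ne\emptyset$; another encoding an overshoot/undershoot dichotomy), chasing successive minimality contradictions until $a=b$. Your envelope-function route is much shorter and makes transparent exactly where one-dimensionality enters---the order dichotomy $g(x)<x$ versus $f(x)>x$---while the paper's minimal-interval machinery has the incidental benefit that the endpoint-hypothesis version (Corollary~2.3) is read off from the same analysis, though of course that corollary also follows immediately from your argument once one observes that its hypotheses already make $A$ and $B$ both nonempty.
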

\begin{proof}
 Let $$\Delta =\{U\subseteq C: U is\:nonempty,\:closed,\:convex\:and\:U\subseteq T(U)\}.$$
Then, $(\Delta,\subseteq)$, where $\subseteq$ is inclusion, is a partially ordered set. Also, by Lemma $2.1$ in \cite{Issa}, every descending chain in $\Delta$ has a lower bound in $\Delta$. Therefore, by Zorn's lemma, $\Delta$ has a minimal element, say $U_{0}$. We show that $U_{0}$ is singleton. Define $F:U_{0}\rightarrow 2^{U_{0}}$ by $F(x)=T(x)\cap U_{0}$ for all $x\in U_{0}$. Then, $F(x)$ is a convex, compact subset of $X$, for all $x\in U_{0}$ since $T(x)$ and $U_{0}$ are convex and compact.  Let $V=\{y\in U_{0} : S(x)\neq\emptyset\}$. Then, $V$ is nonempty as $U_{0}\subseteq T(U_{0})$. It is clear that $V\subseteq T(V)$. By convexity and upper semicontinuity of $T$, it can easily be seen that $V$ is a nonempty compact subset of $U_{0}$ such that $V\subseteq T(V)$ and $V\neq U_{0}$.Then, $U_{0}=co(V)$ by minimality of $U_{0}$ and the fact trhat $V$ is a compact subset in $\mathbb{R}$. Also, since $U_{0}$ is a nonempty convex compact subset of $\mathbb{R}$, it is a closed interval, say $[a,b]$, where $a,b\in V$. In fact,We shall prove that $a= b$, We show it by the way of contradiction; that is, we suppose that $a\neq b$.  Now, let
 $$\Omega=\{[c,d]\subseteq [a,b]: T(c)\cap [c,d]\neq\emptyset\: and  \:\: T(d)\cap [c,d]\neq\emptyset\}.$$ 
Then, $\Omega\neq\emptyset$ Since $[a,b]\in \Omega$. We show that $\Omega$ has a minimal element. Let $\{[c_{i},d_{i}]\}_{i\in I}$ be a descending chain, by inclusion, in $\Omega $. Thus, $\bigcap_{i\in I}[c_{i},d_{i}]$ is a nonempty compact convex subset in $\mathbb{R}$, and therefore a closed interval, say $[c,d]$. By defining the relation $\leq$ on $I$ as : $i\leq j$ iff $[c_{j},d_{j}] \subseteq [c_{i},d_{i}]$, for all $i,j\in I$, $(I,\leq)$ is a directed set. Accordingly, $c=\lim_{i}c_{i}$ and $d=\lim_{i}d_{i}$. Next, we show that $T(c)\cap [c,d]\neq\emptyset\: and  \:\: T(d)\cap [c,d]\neq\emptyset$. Suppose, on contrary, that $T(d)\cap [c,d]=\emptyset$ Therefore, by upper semicontinuity of $T$ there exists an open neibourhood $U$ and $W$ containing $[c,d]$ and $T(d)$ , respectively, such that $ U\cap W=\emptyset$. Also, there exists a neiborhood $U'$ of $d$ so that for all $x$ in $U'$ we have $T(x)\subseteq W$. This implies that there is $i_{0}\in I$ such that $T(d_{i})\subseteq W$ for all $i\geq i_{0}$. On the other hand, there also exists $i_{1}\in I$ that $c_{i},d_{i}\in U$, for all $i\geq i_{1}$. Now having taken $i\geq max\{i_{0},i_{1}\}$ it follows that $[c_{i},d_{i}]\cap T(d_{i})=\emptyset$, a cotradiction. Accordingly, $[c,d]\in \Omega$. Thus, by Zorn's lemma $\Omega$ has a minimal element, say $[c',d']$. If $T(x)\cap [c',d']\neq\emptyset$, for all $x\in [c',d']$; then  by defining $P:[c',d']\longrightarrow 2^{[c',d']}$ as $P(x)=T(x)\cap[c',d']$ and also applying Kakutani's fixed point theorem for mapping $P$, it follows that $T$ has a fixed point in $[c',d']$. This contradicts the minimality of $U_{0}$. Therefore, $T(y)\cap [c',d']=\emptyset$, for some $y\in [c',d']$. Now, suppose that $T(y)>d'$ (to avoid any incovenience, by $T(y)>d'$ we mean $z>d'$ for all $z\in T(y)$) and define
$$\Theta=\{U\subseteq [c',d']: U\: is \: an\: open \: interval \: containing \: y\: and\: T(w)>d'\: for\: all\: w\in U\}.$$

By upper semicontinuity of $T$, $\Theta$ is nonempty. Also , by applying Zorn's lamma, $\Theta$ has a maximal element, by inclusion, such as $U=(s,t)$. Hence, upper semicontinuity of $T$ also implies that $T(t)\cap [c',d']\neq\emptyset$. We shall prove that $d'\in T(t)$. Let $\{x_{n}\}$ and $\{y_{n}\}$ be sequences such that $x_{n}\rightarrow t^{-}$; and $y_{n}\in T(x_{n})$. Thus,  $y_{n}>d'$. Since $T$ is upper semicontinuous and compact valued and $C$ is compact, it is known that $T(C)$ is also compact. Accordingly, by passing to a subsequence we may assume that $y_{n}\rightarrow y$, for some $y\in \mathbb{R}$. Hence, $y\in T(t)$ and $y\geq d'$. It follows that $d'\in T(t)$ as $T(t)$ is a nonempty compact convex subset of $\mathbb{R}$. If $T(d')\cap [t,d']\neq\emptyset$, it contradicts the minimality of $[c',d']$. Accordingly, we may suppose that $T(d')<t$ as $T(d')\cap [c',d']\neq\emptyset$. Now, let
$$\Sigma=\{[m,n]\subseteq [t,d']: either\:n\in T(m), T(n)<m,\: or \: m\in T(n), T(m)>n \}.$$ 

Then, $\Sigma$ is a nonempty set since $[t,d']\in \Sigma$. Let $\{[m_{j},n_{j}]\}_{j\in J}$ be a descending chain, by inclusion, in $\Omega $; then, by the same argument we had for $[c',d']$, for $[m,n]=\bigcap_{j\in J}[m_{j},n_{j}]$ we have $n\in T(m)$ or $m\in T(n)$. We shall prove that $[m,n]\in \Sigma$; for this , having assumed that $n\in T(m)$  it is enough to show that $T(n)<m$. Suppose, on contrary, that there exists $x\in T(n)$ so that $x\geq m$. If $T(n)\cap [m,n]\neq\emptyset$, then it contradicts the minimality of $[c',d']$. Thus, we may assume that $T(n)>n$. Let $O_{1}$ and $O_{2}$ be open sets containing $[m,n]$ and $T(n)$, respectively, and also $O_{1}\cap O_{2}=\emptyset$. Then by upper semicontinuity of $T$ it follows that there exists $j_{0}\in J$ such that $m_{j}\in O_{1}, n_{j}\in O_{1}$,  and $T(n_{j})\in O_{2}$, for all $j\geq j_{0}$. Accordingly,   $T(n_{j})>n_{j}$, for all $j\geq j_{0}$. This contradicts the fact that $m_{j}\in T(n_{j})$, for all $j\geq J_{0}$. Hence, by Zorn's lemma, $(\Sigma,\subseteq)$ has a minimal element such as $[m',n']$. Suppose that $m'\in T(n')$ and $T(m')>n'$, then by the same disscusion we already had for $[t,d']$, there exist $p'$ in $(m',n']$ so that $n'\in T(p')$. This contradicts either the minimality of $[m',n']$ or the minimality of $[c',d']$.\par 
 The similar argument can be repeated with minor alterations for the case when we have $T(y)<c'$. Therefore, any case yields a contradiction. Thus, $a=b$; that is $U_{0}$ is singleton. This complete the proof.   

 \end{proof}

Also, from the proof of Theorem $2.4$, the following result can be derived.
\begin{cor} Let $[a,b]$ be a closed interval in $\mathbb{R}$, and $T:[a,b]\rightarrow 2^{\mathbb{R}}$ be a set-valued upper semicontinuoues mapping such that $T(x)$ is a nonempty compact convex subset of $\mathbb{R}$ for all $x\in [a,b]$. Suppose also that $T(a)\cap [a,b]\neq\emptyset$ and $T(b)\cap [a,b]\neq\emptyset$. Then, $T$ possesses a fixed point in $[a,b]$.
\end{cor}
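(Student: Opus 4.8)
The plan is to recognise that the Corollary is nothing but the last part of the proof of Theorem~2.4, read with the interval $[a,b]$ playing the role of the minimal set $U_0$. In that proof, once $U_0$ had been shown to be a closed interval whose endpoints satisfy $T(a)\cap U_0\neq\emptyset$ and $T(b)\cap U_0\neq\emptyset$ --- which is exactly our hypothesis --- the remaining argument used no property of $U_0$ except this: each of its terminal branches exhibits a point of $[a,b]$ fixed by $T$, and for $U_0$ that is then turned into a contradiction with minimality in $\Delta$. Here we do not want that contradiction; a fixed point of $T$ in $[a,b]$ is precisely the conclusion sought. So the argument, with the clause ``contradicts the minimality of $U_0$'' simply reread as ``is the desired conclusion'', proves the Corollary; every other contradiction appearing in it is with the minimality of an auxiliary interval ($[c',d']$, $[m',n']$, or a chain of nested intervals) and is untouched.

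Concretely, I would set
$$\Omega=\{[c,d]\subseteq [a,b]: T(c)\cap [c,d]\neq\emptyset \text{ and } T(d)\cap [c,d]\neq\emptyset\},$$
which is nonempty since $[a,b]\in\Omega$ by hypothesis. Every descending chain in $(\Omega,\subseteq)$ has a lower bound: the intersection of nested closed intervals is a closed interval $[c,d]$ with $c=\lim c_i$, $d=\lim d_i$, and upper semicontinuity of $T$ together with compactness of $T([a,b])$ forces $T(c)\cap[c,d]\neq\emptyset$ and $T(d)\cap[c,d]\neq\emptyset$ (otherwise separate, say, $[c,d]$ from $T(d)$ by disjoint open sets and use upper semicontinuity at $d$ to find a tail of right endpoints $d_i$ with $T(d_i)$ disjoint from $[c_i,d_i]$, a contradiction). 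Zorn's lemma then yields a minimal $[c',d']\in\Omega$.

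Now comes the dichotomy. If $T(x)\cap[c',d']\neq\emptyset$ for all $x\in[c',d']$, then $P(x):=T(x)\cap[c',d']$ is an upper semicontinuous self-map of $[c',d']$ with nonempty compact convex values, so Kakutani's fixed point theorem gives a fixed point of $P$, hence of $T$, in $[c',d']\subseteq[a,b]$, and we are done. Otherwise $T(y)\cap[c',d']=\emptyset$ for some $y\in[c',d']$; by symmetry assume $T(y)>d'$. Using upper semicontinuity I would take the maximal open interval $U=(s,t)\subseteq[c',d']$ containing $y$ on which $T>d'$, prove $d'\in T(t)$ via a subsequence argument on points $x_n\to t^-$ with $y_n\in T(x_n)$, note that $T(d')\cap[t,d']\neq\emptyset$ would contradict the minimality of $[c',d']$, and so reduce to $T(d')<t$. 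Then I would iterate one level deeper with
$$\Sigma=\{[m,n]\subseteq[t,d']:\text{either } n\in T(m),\,T(n)<m,\text{ or } m\in T(n),\,T(m)>n\},$$
take a minimal element of $\Sigma$ by Zorn's lemma, and reach a contradiction exactly as in the proof of Theorem~2.4.

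As there, the only real difficulty is bookkeeping: organising the nested intervals $[c',d']\supseteq[t,d']\supseteq[m',n']$ and the ``$T$ points outward above $d'$ / below $c'$'' case split so that each auxiliary interval one produces is a \emph{proper} subinterval of the current minimal one (so that minimality is genuinely violated), and checking that every limiting step is licensed by upper semicontinuity and compactness of $T([a,b])$. Since the proof of Theorem~2.4 already carries all of this out, and that proof --- once its appeals to $U_0$ are reread as above --- is precisely an argument that $T$ has a fixed point in $[a,b]$ under the two endpoint conditions, nothing new has to be done.
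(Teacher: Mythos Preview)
Your proposal is correct and matches the paper's approach exactly: the paper gives no separate proof of the Corollary beyond the remark that it ``can be derived'' from the proof of the preceding theorem, and what you have written is precisely that derivation --- start $\Omega$ at $[a,b]$ using the endpoint hypotheses, take a minimal $[c',d']$, and observe that the Kakutani branch yields the desired fixed point while the remaining branches lead to contradictions with the auxiliary minimalities of $[c',d']$ and $[m',n']$ (so are impossible), exactly as in that proof. The only cosmetic point is that the reference ``Theorem~2.4'' (inherited from the paper) is a typo for Theorem~2.2; the argument you describe is the proof of Theorem~2.2, not of Theorem~2.4.
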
 

The following example shows that Theorem $2.2$ is not valid in more general spaces.\par 
{\bf Example.} Let $T$ be the set-valued mapping from $C=[0,2]$ into ${2^{\mathbb{R}}}^{2}$ defined by
$$T(x)=\begin{cases}[1,2-x]\times \{x\}; & x\in [0,1],\\ [2-x,1]\times \{2-x\};& x\in (1,2],\end{cases}.$$   
 where $\times $ is the Cartesian product. It is obvious that $C\subset T(C)$ as we have $T(0)=[1,2]$ and $T(2)=[0,1]$. It can easily be verified that $T$ is a nonempty convex compact upper semicontinuous  set-valued mapping that does not possess any fixed point in $C$.\par 

This example gives rise to the following open problem:\par
{\bf Open problem 2.} As it can be seen from the above example, Codim$(\frac{\mathcal {M}(T(C))}{\mathcal {M}(C)})\neq 0$  but in Theorem $2.2$ it is zero. Now the question that whether Theorem  $2.2$  holds in more general spaces where we have it zero, is still unanswerd, where by $\mathcal {M}(T(C))$ and $\mathcal {M}(C)$ we mean   the subspacees of $X$ containing $T(C)$ and $C$, respectively, with minimum dimensions.

 In what follows we prove the existence of a  common fixed point for a family of  commutative none convex set-valued mappings. Not only it provides an answer to question $5.9$ in \cite{Lau} but also it gives an insight into the structure of the set of common fixed points for set-valued mappings.
 
 \begin{thm}  Let  $C$ be a nonempty convex compact subset of $\mathbb{R}$. Suppose that $\Psi=\{T_{i}: i\in I\}$ is a family of  commutative nonexpansive set-valued mappings from $C$ into $2^{C}$ in which there are  at most two mappings that are not singled valued. If for each $i\in I$ and $x\in C$, $T_{i}(x)$ is a nonempty closed convex subset of $C$, then the common fixed points of $\Psi$ is a nonempty convex  nonexapansive retract of $C$.
\end{thm}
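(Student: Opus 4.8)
The plan is to follow the two-stage scheme already used in Theorem 2.1, but now iterated over the (at most two) genuinely set-valued members of $\Psi$ and combined with the classical single-valued Markov--Kakutani theory for the remaining ones. First I would dispose of the single-valued part: the family $\Psi_0 = \{T_i : i\in I,\ T_i \text{ single-valued}\}$ consists of commuting nonexpansive (hence continuous) self-maps of the compact convex set $C\subseteq\mathbb R$; by the Markov--Kakutani theorem the common fixed point set $D_0 = \bigcap_{T_i\in\Psi_0}\operatorname{Fix}(T_i)$ is a nonempty compact convex subset of $\mathbb R$, i.e.\ a closed interval (possibly a point). One then checks that $D_0$ is a nonexpansive retract of $C$ via the nearest-point projection onto the interval $D_0$, which is nonexpansive in $\mathbb R$. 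I also need that each remaining set-valued $T_j$ maps $D_0$ into $2^{D_0}$: this uses commutativity, since for $x\in D_0$ and $T_i\in\Psi_0$ we get $T_i(T_j(x)) = T_j(T_i(x)) = T_j(x)$, so every point of $T_j(x)$ is fixed by $T_i$.

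Next I would handle the (at most two) set-valued mappings, say $T_1, T_2$, restricted to $D_0$. Restricting a nonexpansive compact-convex-valued map to the interval $D_0$ keeps it upper semicontinuous with nonempty compact convex values, and Corollary 2.3 (or Theorem 2.2 applied on $D_0$, noting $D_0\subseteq T_j(D_0)$ fails in general, so really Corollary 2.3 via the endpoint condition, which follows because $T_j(D_0)\subseteq D_0$ forces the endpoints into $D_0$) gives $\operatorname{Fix}(T_1|_{D_0})\neq\emptyset$. The key structural claim is that $\operatorname{Fix}(T_1|_{D_0})$ is again a closed interval: since $T_1$ is nonexpansive and convex-valued on an interval, if $x<y$ are both fixed and $x\le z\le y$, convexity of $T_1$ gives a point of $T_1(z)$ between a point of $T_1(x)\ni x$ and a point of $T_1(y)\ni y$, and a Hausdorff-metric/nonexpansiveness estimate pins $z\in T_1(z)$; I would write $z=\lambda x+(1-\lambda)y$, pick $t\in T_1(x)$, $s\in T_1(y)$ with $\lambda t+(1-\lambda)s\in T_1(z)$, and use $|t-x|\le|\text{(something)}|$ together with $H(T_1(x),T_1(z))\le|x-z|$ to squeeze. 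Granting the claim, $D_1 = \operatorname{Fix}(T_1|_{D_0})$ is a nonempty closed subinterval of $D_0$, hence a nonexpansive retract of $D_0$ (nearest-point projection), and composing retractions shows $D_1$ is a nonexpansive retract of $C$.

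Then I would repeat the argument once more with $T_2$ on $D_1$, using commutativity of $T_2$ with everything to ensure $T_2(D_1)\subseteq D_1$ (same commutation computation as above, now against both $\Psi_0$ and $T_1$), obtaining $D_2 = \operatorname{Fix}(T_2|_{D_1})$, a nonempty closed subinterval of $D_1$ and a nonexpansive retract of $C$. Finally I would observe $D_2 = \bigcap_{i\in I}\operatorname{Fix}(T_i)$: any $x\in D_2$ lies in $D_1\subseteq D_0$, so it is fixed by every single-valued $T_i$ and by $T_1$, and $x\in\operatorname{Fix}(T_2|_{D_1})$ gives $x\in T_2(x)$. Conversely any common fixed point lies in each successive set by construction. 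Thus the common fixed point set is a nonempty convex nonexpansive retract of $C$, which is the assertion.

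The main obstacle I expect is the structural claim that $\operatorname{Fix}(T_j|_{D_0})$ is an interval (equivalently, connected): convexity of the set-valued map together with nonexpansiveness should force this in one dimension, but the estimate requires care because $T_j(x)$ being an interval means "the fixed point of $T_j$ in $T_j(x)$" — as used in Theorem 2.1 via Theorem 2.2 of \cite{Issa} — is not unique, so I must argue with the whole fixed-point set rather than a selection, and the interplay between the Hausdorff-metric nonexpansiveness and pointwise convexity is the delicate point. A fallback, if the clean interval claim is troublesome, is to note that $\operatorname{Fix}(T_j|_{D_0})$ is at least closed and, being the fixed-point set of an upper semicontinuous convex-valued map on an interval, to invoke a minimality/Zorn argument in the spirit of Theorem 2.2 to extract the retract directly.
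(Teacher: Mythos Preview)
Your overall plan---peel off maps one at a time, showing at each stage that the running common fixed-point set is a nonempty closed subinterval and hence a nonexpansive retract via nearest-point projection---matches the paper's in spirit, but two of your steps do not go through as written.

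First, your argument that $\operatorname{Fix}(T_1|_{D_0})$ is an interval invokes convexity of the \emph{map} $T_1$: you write $z=\lambda x+(1-\lambda)y$ and pick $t\in T_1(x)$, $s\in T_1(y)$ with $\lambda t+(1-\lambda)s\in T_1(z)$. That is exactly the convex set-valued property from the introduction, and Theorem~2.4 does \emph{not} assume it; the $T_i$ are only nonexpansive with convex \emph{values}. The paper closes this gap differently: it passes to the single-valued selection $f_i(x)=P_{T_i(x)}(x)$ (the nearest point of $T_i(x)$ to $x$), checks by a case analysis on the relative order of $x,y,T_i(x),T_i(y)$ in $\mathbb{R}$ that $f_i$ is nonexpansive, notes $\operatorname{Fix}(f_i)=\operatorname{Fix}(T_i)$, and then runs the standard convexity argument for the fixed-point set of a single-valued nonexpansive self-map of an interval. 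Your ``fallback'' Zorn argument does not obviously produce convexity either.

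Second, and more seriously, the invariance claim $T_2(D_1)\subseteq D_1$ is not the ``same commutation computation''. Against a single-valued $T_i\in\Psi_0$ you correctly get $T_i(T_2(x))=T_2(T_i(x))=T_2(x)$, hence $T_2(x)\subseteq D_0$. But against the set-valued $T_1$, for $x\in D_1$ you only obtain
\[
T_2(x)\subseteq T_2(T_1(x))=T_1(T_2(x)),
\]
which says the \emph{set} $T_2(x)$ expands under $T_1$, not that each $y\in T_2(x)$ lies in $T_1(y)$. This is precisely why the paper (i) handles the two set-valued maps first and the single-valued ones afterwards, and (ii) at the set-valued stage does \emph{not} claim invariance: instead it works on $\operatorname{Fix}(T_1)$ with $G(x)=T_2(x)\cap\operatorname{Fix}(T_1)$ and uses Theorem~2.2 (the expansion fixed-point result, which needs only upper semicontinuity and compact convex values) applied to $T_1$ on the interval $T_2(x)$ to show $G(x)\neq\emptyset$. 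The single-valued maps are then added inductively via a retraction $r:C\to F_{n-1}$ and the composite $T_n\circ r$, where the invariance argument \emph{does} work because $T_n$ is single-valued: for $y\in F_{n-1}$ one has $T_n(y)\in T_n(T_i(y))=T_i(T_n(y))$, and Bruck's theorem supplies the nonexpansive retraction onto $\operatorname{Fix}(T_n\circ r)=F_n$. Your ordering could be salvaged, but only by replacing the false invariance with the $G$-map and Theorem~2.2 at the $T_2$ step.
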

\begin{proof}  For  $i\in I$ we show that $Fix(T_{i})$ is convex. For each $x\in C$, define 
$$f_{i}(x)=P_{T_{i}(x)}(x)=\{y\in T_{i}(x): d(x,y)=\inf \{d(x,z):z\in T_{i}(x)\}\},$$ where $ P_{T_{i}(x)}$ is the metric projection on $T_{i}(x)$ for each $x\in C$. It can easily be seen that  $Fix(f_{i})=Fix(T_{i})$. To avoid any complexity in writing,  by $x\leq T_{i}(y)$ we mean $x\leq z$ for all $z\in T_{i}(y)$ and by $T_{i}(x)\leq T_{i}(y)$ we mean $w\leq z$ for all $w\in T_{i}(x)$ and $z\in T_{i}(y)$  . We shall prove that $f_{i}$ is a nonexpansive mapping from $C$ into $C$. Since $T_{i}$ is a nonempty closed convex  mapping in $\mathbb{R}$, we may suppose  $T_{i}(x)=[a,b], T_{i}(y)=[c,d]$ for $x,y\in C$. We consider the following cases:\par 
Case $1$. Either $x\leq y\leq T_{i}(x)\leq T_{i}(y)$ or $x\leq T_{i}(x)\leq y\leq  T_{i}(y)$; then by definition of $f_{i}$ we have $f_{i}(x)= a, f_{i}(y)=c$, therefore, 
$$\norm{ f_{i}(x)- f_{i}(y)}\leq H(T_{i}(x),T_{i}(y))\leq \norm {x-y}.$$

Case 2. In either case $x\leq   T_{i}(y)\leq y\leq T_{i}(x) $ or $x\leq   T_{i}(x)\leq T_{i}(y) \leq y$ we have  $f_{i}(x)= a, f_{i}(y)=d$ and it is clear that $\norm{ f_{i}(x)- f_{i}(y)}\leq \norm {x-y}$.\par 

Case 3. By nonexpansivity of $H$ the case when we have $   T_{i}(x)< x, y< T_{i}(y) $ is also imposssible.\par 

We can consider other cases by replacing $\leqslant$ by $\geqslant$ in the mentioned cases and obtain the same result. Next we show that $Fix(f_{i})$ is convex. Let $x,y\in Fix(f_{i})$ and $0\leq \lambda\leq 1$, then for $z=\lambda x+(1-\lambda)y$ we have 
$$\norm {x-f_{i}(z)}=\norm {f_{i}(x)-f_{i}(z)}\leq\norm{x-z}=(1-\lambda)\norm{x-y},$$
$$\norm {y-f_{i}(z)}=\norm {f_{i}(y)-f_{i}(z)}\leq\norm{y-z}=\lambda\norm{x-y}.$$
These yields
$$\norm {x-y}\leq \norm{x-f_{i}(z)}+\norm{f_{i}(z)-y}\leq\norm{x-z}+\norm {y-z}=\norm{x-y}.$$
Consequently, $\norm{x-y}=\norm {x-f_{i}(z)}+\norm{f_{i}(z)-y}$. We show that $x\leq f_{i}(z)\leq y$. Suppose, on contrary, that either $ f_{i}(z)\leq x$ or $y\leq f_{i}(z)$; each case results in either $\norm{ f_{i}(z)-y}>\norm{x-y}$ or  $\norm{ f_{i}(z)-x}>\norm{x-y}$, respectively; which is a contradiction. Hence, there is $\mu \in [0,1]$ such that $f_{i}(z)=\mu x+(1-\mu) y$. Thus, 
$$\mu\norm {x-y}= \norm {y-f_{i}(z)}=\norm {f_{i}(y)-f_{i}(z)}\leq\norm{y-z}=\lambda\norm{x-y},$$
$$(1-\mu)\norm {x-y}= \norm {x-f_{i}(z)}=\norm {f_{i}(x)-f_{i}(z)}\leq\norm{x-z}=(1-\lambda)\norm{x-y}.$$
Accordingly, $\lambda  = \mu$, that is, $f_{i}(z)=z$. This proves that  $Fix(T_{i})=Fix(f_{i})$ is convex.\par 

 By finite intersection property for compact sets we may suppose that $I=\{1,2,...,n\}$ where $n\in \mathbb{N}$. Let $F_{n}=\cap_{i=1}^{n}  Fix( T_{i})$.\par 
 The proof is by induction. For $n=2$,  assume that neither $T_{1}$ nor $T_{2}$ are single valued. Following the proof of Theorem $2.1$ and applying Theorem $2.2$, it follows that $F_{n}$ is a nonempty  convex subset of C .\par 
We shall prove that $F_{2}$ is a nonexpansive retract of $C$. It is known, by Bruck \cite{Bruck}, that for the nonexpansive single valued mapping $f_{1}$, already defined, there exists a nonexpansive retraction $g_{1}$ from $C$ onto $Fix(f_{1})=Fix(T_{1})=F_{1}$. Now define $S:C\rightarrow 2^{C}$ by
  $$S(x)=T_{2}(g_{1}(x))\cap Fix(T_{1}).$$
  Then, it is easy to verify that 
  $$H(S(x),S(y))\leq H(T_{2}(g_{1}(x)),T_{2}(g_{1}(y)))\leq \norm{g_{1}(x)-g_{1}(y)}\leq \norm{x-y}.$$
  
 Having noticed $g_{1}(x)=x$ and following the proof of Theorem $2.1$, it yields that $S(x) $ is a nonempty convex compact subset of $C$ for all $x\in C$.
  On the other hand, for $x\in Fix(S)$ we have $x\in Fix(T_{1})$; thus  $g_{1}(x)=x.$ Therefore, $x\in T_{2}(x)$; that is,  $Fix(S)\subseteq F_{2} $. The inclusion $F_{2} \subseteq Fix(S)$ is also clear. Accordingly, the first part of the proof implies that $F$ is a nonexpansive retract of $C$.  \par 
  Now, let $n\geq 3$,  $F_{n-1}\neq\emptyset$ and $r:C\rightarrow F_{n-1}$ be its correspondant retraction. Then, we show that Fix$(T_{n}\texttt{o} r)=F_{n}$. The inclusion $F_{n}\subseteq Fix(T_{n}\texttt{o} r)$ is trivial. For the reverse inclusion, let $x\in Fix (T_{n}\texttt{o} r)$. Since $T_{n}$ commutes with $T_{i}$ for $i=1,...,n-1$ and $r(x)\in F_{n-1}$, $F_{n-1}$ is $T_{n}$ invariant and $x=T_{n}\texttt{o} r(x)\in F_{n}$. Therefore, $ r(x)=x$. That is, $x=T_{n}\texttt{o} r(x)=T_{n}(x)$. Accordingly,  Fix$(T_{n}\texttt{o} r)\subseteq F_{n}$. Applying Bruck's theorem for the nonexpansive mapping $T_{n}\texttt{o} r$, the proof is completed.

\end{proof}
         
{\bf Remark.} In \cite{Boyce}, Boyce gave an example of two commutative mappings that have no common fixed point. This shows that the condition that the mappings in Theorem $2.4$ are nonexpansive can not be dropped.


\end{document}